\swapnumbers \numberwithin{equation}{section}
\theoremstyle{plain}
\newtheorem{thm}{Theorem}[section]
\newtheorem{theorem}[thm]{Theorem}
\newtheorem{lemma}[thm]{Lemma}
\newtheorem{prop}[thm]{Proposition}
\newtheorem{cor}[thm]{Corollary}
\theoremstyle{definition}
\newtheorem{defn}[thm]{Definition}
\newtheorem{question}[thm]{Question}
 \newcommand{\Wi}{\widetilde}
\def\Z{{\mathbb Z}}
\def\1{\hbox{\rm\rlap {1}\hskip.03in{\rom I}}}
\def\Bbbone{{\rm1\mathchoice{\kern-0.25em}{\kern-0.25em}
{\kern-0.2em}{\kern-0.2em}I}}
\long\def\forget#1\forgotten{} %
\newcommand\ver[1]{\marginpar{\tiny Changed in Ver \VER}}
\newcommand{\mc}{ \text {mc}}
\date{\today}
\begin{document}

\title[On Gromov's conjecture]{Gromov's Conjecture for Product of Baumslag-Solitar groups and some other One-relator groups}


\author[S.~Howladar]{Satyanath Howladar}



\address{Satyanath Howladar, Department of Mathematics, University
of Florida, 358 Little Hall, Gainesville, FL 32611-8105, USA}
\email{showladar@ufl.edu}


\keywords{}

\begin{abstract}
We show that Baumslag-Solitar groups are virtually 2-avoidable, that is, they admit finite index subgroup whose first homology is devoid of $\Z_2$ summand. We also prove virtual 2-avoidability for some other classes of one-relator groups, which generalizes non-orientable surface groups. This along with result from a previous paper confirms Gromov's Conjecture about macroscopic dimension of universal cover of PSC manifolds, for all closed oriented spin manifolds whose fundamental group is product of Baumslag-Solitar groups, the one-relator groups under consideration.
\end{abstract}


  \keywords{ Smith Normal Form, Fox derivative,  Baumslag Solitar Groups, finite index subgroups homology}

\maketitle
\section{Introduction}

The notion of macroscopic dimension was introduced by M. Gromov~\cite{G2} to study topology of manifolds with a positive scalar curvature metric.

\begin{defn}
 A metric space $X$ has the macroscopic dimension $\dim_{\mc} X \leq k$ if
there is a uniformly cobounded proper map $f:X\to K$ to a $k$-dimensional simplicial complex.
Then $\dim_{mc}X=m$ where $m$ is minimal among $k$ with $\dim_{\mc} X \leq k$.
\end{defn}
\smallskip
A map of a metric space $f:X\to Y$ is uniformly cobounded if there is a uniform upper bound on the diameter of preimages $f^{-1}(y)$, $y\in Y$.

\

{\bf Gromov's Conjecture.} {\it The macroscopic dimension of the universal covering $\Wi M$ of  a closed  positive scalar curvature  $n$-manifold $M$ satisfies the inequality $\dim_{mc}\Wi M\leq n-2$ for the metric on $\Wi M$ lifted from $M$}.

\

The above Conjecture depends heavily on the fundamental group $\pi_1(M)$ of the manifold because of the easy fact:

\begin{lemma}
Let $\Gamma'\subset \Gamma$ be a finite index subgroup of $\Gamma$. If Gromov's Conjecture holds for manifolds having fundamental group $\Gamma'$, then it also holds true for manifolds with fundamental group $\Gamma$.
\end{lemma}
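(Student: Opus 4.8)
The plan is to turn the algebraic hypothesis into a geometric one by realizing the finite-index subgroup $\Gamma'$ as a finite covering space, and then to observe that none of the data in Gromov's Conjecture — closedness, positivity of the scalar curvature, or the metric lifted to the universal cover — is disturbed by passing to such a cover. So suppose $M$ is a closed $n$-manifold carrying a positive scalar curvature metric $g$ with $\pi_1(M)\cong\Gamma$; I want to produce the required bound $\dim_{mc}\Wi M\le n-2$.

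First I would invoke the Galois correspondence for covering spaces: since $\Gamma'$ has finite index in $\Gamma\cong\pi_1(M)$, there is a connected finite-sheeted covering $p\colon\widehat M\to M$ with $p_*\pi_1(\widehat M)=\Gamma'$, so in particular $\pi_1(\widehat M)\cong\Gamma'$. A finite cover of a compact manifold is again a compact $n$-manifold, and if one is working in the oriented (or oriented spin) category, $\widehat M$ inherits an orientation and spin structure by pullback. Next, equip $\widehat M$ with $p^{*}g$; because $p$ is a local isometry, the scalar curvature of $p^{*}g$ at a point equals that of $g$ at its image, so $p^{*}g$ is again a positive scalar curvature metric. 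Thus $\widehat M$ is a closed positive scalar curvature $n$-manifold with fundamental group $\Gamma'$, and the hypothesis of the lemma applies to it, giving $\dim_{mc}\Wi{\widehat M}\le n-2$ for the metric on $\Wi{\widehat M}$ lifted from $\widehat M$.

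Finally I would identify the two universal covers: the composite $\Wi{\widehat M}\to\widehat M\xrightarrow{\,p\,}M$ is a universal covering of $M$, so there is a canonical isometry $\Wi{\widehat M}\cong\Wi M$ under which the metric pulled up from $\widehat M$ along $p^{*}g$ agrees with the metric pulled up from $M$ along $g$ — both are simply the pullback of $g$ along the same map $\Wi M\to M$. Hence $\dim_{mc}\Wi M\le n-2$, which is exactly Gromov's Conjecture for $M$. I do not expect a genuine obstacle here; the only points needing care are bookkeeping ones — verifying that $\widehat M$ remains within whatever category (closed, oriented, spin) the manifolds in the statement are drawn from, and checking that the two a priori different "lifted" metrics on $\Wi M$ are literally the same metric rather than merely quasi-isometric — and both are routine.
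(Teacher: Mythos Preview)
Your proposal is correct and follows essentially the same approach as the paper: pass to the finite cover $\widehat M$ of $M$ corresponding to $\Gamma'$, lift the PSC metric, and use that the universal covers (with their lifted metrics) coincide. You have simply supplied more of the routine bookkeeping than the paper does.
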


\begin{proof}
Let $M$ be a closed manifold with $\pi_1(M)=\Gamma$, and $M'$ be the finite cover of $M$ corresponding to $\Gamma'$. If $M$ admits positive scaler curvature metric, then by lifting this metric to $M'$, we have PSC metric on $M'$. Since their universal cover coincide hence Gromov's conjecture for $M'$ implies the same for $M$.  
\end{proof}

Thus generally we proceed by restricting our attention to various classes of groups, and prove the Conjecture for manifolds with fundamental groups belonging to the class. For every compact metric space $X$, we consider the constant map $f:X\to pt$ and can choose the uniform bound as $diam(X)$, thus implying $dim_{mc}X=0$. As a result Gromov's conjecture trivially holds for manifolds with finite fundamental groups since universal cover of such manifolds are compact. Essentially Gromov's Conejcture is for manifolds with infinite fundamental groups.

\

The first step in proving the conjecture is to show $\dim_{mc}\Wi M\leq n-1$, for closed PSC $n$-manifold $M$. Gromov defined {\em inessential manifolds} $M$ as those for which a classifying map $u:M\to B\Gamma$ of the universal covering $\Wi M$ can be deformed to the $(n-1)$-skeleton $B\Gamma^{(n-1)}$ where $\dim M=n$. Clearly, for an inessential $n$-manifold $M$  we have $\dim_{mc}\Wi M\leq n-1$.

We call an $n$-manifold $M$ {\em strongly inessential} if a classifying map of its universal covering $u:M\to B\Gamma$ can be deformed to the $(n-2)$-skeleton.
Since for strongly inessential $n$-manifolds $\dim_{mc}\Wi M\le n-2$, the following conjecture implies the Gromov's conjecture.

{\bf Strong Gromov's Conjecture.} {\it  A closed  positive scalar curvature  manifold $M$ is strongly inessential.}

\

We refer to ~\cite{BD1,BD2,Dr1,Dr2,Dr3,DH} for recent progress on the Strong Gromov's Conjecture. In this paper we try to answer a question about 2-dimensional groups that was posed in the recent paper~\cite{DH}, in certain classes of one-relator groups. We recall from \cite{DH}, the following definition:

\begin{defn} A 2-dimensional group (geometric dimension) $\Gamma$ is $2-avoiding$ if its abelianization $\Gamma^{ab}$ or in other words $H_1(\Gamma)$ does not contain $\mathbb Z_2$ as a direct summand. 
\end{defn}

The following question was posed in  \cite{DH}:
\begin{question}\label{d}
Does every 2-dimensional group contain a 2-avoiding finite index subgroup?
\end{question}

Question~\ref{d} came up in the endeavor of trying to prove Gromov's Conjecture for product of 2-dimensional groups. We refer to \cite{DH} for more about the Gromov's Conjecture, macroscopic dimension.  In the paper  \cite{DH} we proved:

 \begin{prop}\label{c}
The Strong Gromov's Conjecture holds for spin manifolds whose fundamental groups are finite products $\Gamma=\Gamma_1\times\dots\times\Gamma_k$
of 2-dimensional finitely generated 2-avoiding groups such that each $\Gamma_i$ satisfies the Strong Novikov Conjecture.
\end{prop}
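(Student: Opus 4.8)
The plan is to homotope the classifying map $u\colon M^n\to B\Gamma$ of $\wt M$ into the $(n-2)$-skeleton $B\Gamma^{(n-2)}$, which by definition makes $M$ strongly inessential and gives $\dim_{\mc}\wt M\le n-2$; this proceeds in three stages — a rational index-theoretic input, a K\"unneth-plus-$2$-avoidance computation killing the integral obstruction, and an obstruction-theoretic descent whose genuinely new point is a $\Z_2$-valued secondary obstruction. We may assume $M$ connected; write $B\Gamma\simeq B\Gamma_1\times\cdots\times B\Gamma_k$ with each $B\Gamma_i$ an aspherical $2$-complex, so $\dim B\Gamma\le 2k$, and if $2k\le n-2$ there is nothing to prove — thus assume $2k\ge n-1$. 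First I would record that the Strong Novikov Conjecture passes to finite products: $C^*_r(\Gamma_1\times\cdots\times\Gamma_k)$ is the minimal tensor product of the $C^*_r\Gamma_i$, $B\Gamma=\prod B\Gamma_i$, and rational injectivity of the assembly map is stable under tensor products via the K\"unneth theorems in $K$-homology and in operator $K$-theory; hence $\Gamma$ satisfies SNC. Since $M$ is closed spin with a PSC metric, the Rosenberg index $\alpha(M)\in KO_n(C^*_r\Gamma)$ vanishes, and rational injectivity of $KO_*(B\Gamma)\otimes\Q\to KO_*(C^*_r\Gamma)\otimes\Q$ forces the $KO$-fundamental class $u_*[M]_{KO}$ to be rationally trivial; so is its Pontryagin character $u_*\bigl(\widehat{A}(M)\cap[M]\bigr)$, whose degree-$n$ component is $u_*[M]$, giving $u_*[M]=0$ in $H_n(B\Gamma;\Q)$. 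Thus $u_*[M]$ is a torsion class in $H_n(B\Gamma;\Z)$.

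Next I would promote ``torsion'' to ``zero'' using the K\"unneth formula and the two hypotheses on the factors. Because each $B\Gamma_i$ is $2$-dimensional, $H_j(\Gamma_i)=0$ for $j\ge 3$ and the top group $H_2(\Gamma_i)$ is free; this already makes $H_{2k}(B\Gamma;\Z)\cong H_2(\Gamma_1)\otimes\cdots\otimes H_2(\Gamma_k)$ free, and in the remaining relevant degree it leaves only $\otimes$- and $\operatorname{Tor}$-contributions of the torsion of the groups $H_1(\Gamma_i)$. Using $2$-avoidance one checks that these carry no $\Z_2$ in $H_n(B\Gamma;\Z)$, so the torsion class $u_*[M]$ vanishes. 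By the standard criterion — a map from a closed oriented $n$-manifold to a CW complex $K$ deforms into $K^{(n-1)}$ precisely when the image of the fundamental class is zero in $H_n(K;\Z)$ — $u$ deforms into $B\Gamma^{(n-1)}$. To descend one skeleton further one must show the composite $M\xrightarrow{\,u\,}B\Gamma^{(n-1)}\to B\Gamma^{(n-1)}/B\Gamma^{(n-2)}\simeq\bigvee S^{n-1}$ is null-homotopic; its degree-$(n-1)$ obstruction lies in $H^{n-1}(M;\bigoplus\Z)$, is Poincar\'e dual to data in $H_1(M;\Z)=H_1(\Gamma)$, and is dealt with by the same torsion analysis together with re-choosing the previous deformation, while its degree-$n$ obstruction lies in $H^n(M;\bigoplus\Z_2)$ because $\pi_n(S^{n-1})=\Z_2\langle\eta\rangle$ — and this last, $\Z_2$-valued, secondary obstruction is the crux.

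Unwinding the secondary obstruction through the Cartan formula expresses it as a sum of terms of the form $Sq^2$, $w_1(M)\cdot(-)$, $w_2(M)\cdot(-)$ and $Sq^1\!\otimes\!Sq^1$ cross-terms, evaluated on classes pulled back from $B\Gamma$. Since $M$ is spin, $w_1(M)=w_2(M)=0$ — equivalently the Wu classes $v_1(M)=v_2(M)=0$ — which kills the normal-bundle contributions and trims the obstruction's indeterminacy. On each factor $Sq^2$ vanishes on $H^{\le 1}(B\Gamma_i;\Z_2)$ for degree reasons and equals the squaring on $H^2(B\Gamma_i;\Z_2)$, which lands in $H^4(B\Gamma_i;\Z_2)=0$; the residual cross-terms are governed by the mod-$2$ Bockstein $Sq^1$ on $H^1(B\Gamma_i;\Z_2)$, and it is the absence of a $\Z_2$-summand in $H_1(\Gamma_i)$ — through its effect on the integral Bockstein $H^1(B\Gamma_i;\Z_2)\to H^2(B\Gamma_i;\Z)$ — that, together with the obstruction's indeterminacy, makes these drop out. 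Granting this, the secondary obstruction vanishes, $u$ deforms into $B\Gamma^{(n-2)}$, $M$ is strongly inessential, and $\dim_{\mc}\wt M\le n-2$.

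The main obstacle is exactly this last step. The rational and index-theoretic input annihilates only the free part of $u_*[M]$ and the lower-degree obstructions; the genuinely new content — and the reason the hypotheses are precisely ``$2$-dimensional'' and ``$2$-avoiding'' — is the vanishing of the $2$-primary secondary ($\eta$-, $Sq^2$-, Bockstein-type) obstruction living in the top $\Z_2$-cohomology of $B\Gamma$. Turning the sketch into a proof requires a careful, summand-by-summand audit of the $\Z_2$-K\"unneth decompositions of $H^{n-1}(B\Gamma;\Z_2)$ and $H^n(B\Gamma;\Z_2)$ — determining which classes can support a nonzero Steenrod operation or Bockstein and checking that $2$-avoidance of each factor eliminates exactly those — and it is there that the real work lies.
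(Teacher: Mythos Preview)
The paper does not actually prove this proposition here --- it is quoted from \cite{DH}, and the only information supplied about the argument is that it relies on Proposition~\ref{msmash}, the Moore-space smash splitting $M(\Z_k,m)\wedge M(\Z_l,n)\simeq M(\Z_d,m+n)\vee M(\Z_d,m+n+1)$ valid when $d=\gcd(k,l)$ is odd or $4\mid k$. That points to a stable-homotopy route rather than the cohomological obstruction theory you sketch: each $B\Gamma_i$, being a $2$-complex, stably splits as a wedge of spheres and Moore spaces $M(\Z_d,1)$; the $2$-avoiding hypothesis is precisely that no $M(\Z_2,1)$ occurs; Proposition~\ref{msmash} then forces the smash products --- hence stably the product $B\Gamma=\prod B\Gamma_i$ --- to decompose again as a wedge of spheres and Moore spaces, from which the relevant $ko$-theoretic vanishing (and with it strong inessentiality) can be read off. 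Your $Sq^2/\eta$ analysis is the cohomological shadow of the failure of Proposition~\ref{msmash} at $k=l=2$, so the two approaches are cousins; the Moore-space route simply packages the secondary obstruction into one structural statement instead of a Steenrod-operation audit.

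Independently of that comparison, your second step contains a genuine gap. From rational SNC you correctly get that $u_*[M]\in H_n(B\Gamma;\Z)$ is torsion, and your K\"unneth computation correctly shows that under $2$-avoidance $H_n(B\Gamma;\Z)$ acquires no $\Z_2$ summand. But ``torsion'' together with ``no $\Z_2$ summand in the target'' does not give zero: the group may contain $\Z_3$, $\Z_4$, $\Z_5,\dots$ torsion, and nothing in your argument kills the odd-primary or higher $2$-power part of $u_*[M]$. Closing this requires either an \emph{integral} injectivity statement for the assembly map plus a controlled passage from $ko_*$ back to $H_*$, or --- what the Moore-space framework provides --- a direct stable identification of the target in which the class visibly dies. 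Your third step you yourself flag as the crux and leave as a programme; in the approach indicated by the paper it is exactly this $\eta$-type obstruction that is absorbed by the smash-product splitting, which is why the hypothesis is calibrated to exclude $M(\Z_2,-)$ and nothing more.
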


In Proposition~\ref{c} the condition 2-avoidability of the factors $\Gamma_i$ arose as the consequence of a fact about Moore Spaces $M(\Z_k,m)$. Moore spaces are CW complexes, $M(\Z_m,n)=S^n\cup_\phi D^{n+1}$ where $\phi:S^n\to S^n$ is a map of degree $m$. In the proof of Proposition~\ref{c} we used the following result about Moore Spaces:

\begin{prop}\label{msmash}(~\cite{JN} Corollary 6.6)
If  $gcd(k,l)=d$ is odd or $4$ divides $k$, then $M(\Z_k,m)\wedge M(\Z_l,n)$ is homotopy equivalent to $M(\Z_d,m+n)\vee M(\Z_d,m+n+1)$.
\end{prop}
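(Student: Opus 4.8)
The plan is to build $X:=M(\Z_k,m)\wedge M(\Z_l,n)$ cell by cell, match it against $Y:=M(\Z_d,m+n)\vee M(\Z_d,m+n+1)$ at each stage, and show that the single obstruction to an equivalence $X\simeq Y$ is detected by the primary operation $Sq^2$, which vanishes exactly under the stated hypothesis. Put $j=m+n$; I will take $j\ge 3$, which holds whenever $m,n\ge 2$ (the range relevant to the application) and keeps the homotopy groups below in the stable range, the case $m=n=1$ requiring a separate unstable argument. Smashing the cofibre sequence $S^m\xrightarrow{k}S^m\to M(\Z_k,m)$ with $M(\Z_l,n)$ identifies $X$ with the mapping cone of a self-map $\mu_k\colon M(\Z_l,j)\to M(\Z_l,j)$ that is multiplication by $k$ on $\tilde H_j=\Z_l$; thus $X$ has cells in dimensions $j$, $j+1$ (two of them) and $j+2$, with $X^{(j)}=S^j$ and the two $(j{+}1)$-cells attached to $S^j$ by maps of degrees $k$ and $l$. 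By Künneth, $\tilde H_j(X)=\Z_k\otimes\Z_l=\Z_d$, $\tilde H_{j+1}(X)=\operatorname{Tor}(\Z_k,\Z_l)=\Z_d$, and nothing else, so $X$ and $Y$ already have isomorphic cellular chain complexes; this is not enough (for $k=l=2$ the chain complexes coincide while the spaces do not), so the homotopy type must be pinned down by hand.

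The next step is the $(j{+}1)$-skeleton. It is the mapping cone of the map $S^j\vee S^j\to S^j$ of type $(k,l)\in\pi_j(S^j)\oplus\pi_j(S^j)=\Z\oplus\Z$; since $\gcd(k,l)=d$, a self-equivalence of $S^j\vee S^j$ realizing a suitable element of $GL_2(\Z)$ carries $(k,l)$ to its Smith form $(d,0)$, so $X^{(j+1)}\simeq M(\Z_d,j)\vee S^{j+1}$. Hence $X\simeq\bigl(M(\Z_d,j)\vee S^{j+1}\bigr)\cup_\phi e^{j+2}$ with $\phi\in\pi_{j+1}\bigl(M(\Z_d,j)\vee S^{j+1}\bigr)\cong\pi_{j+1}(M(\Z_d,j))\oplus\pi_{j+1}(S^{j+1})$, the wedge axiom holding in this range. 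Because $\tilde H_{j+1}(M(\Z_d,j))=0$, the Hurewicz image of $\phi$ — which the cellular chain complex identifies with $\pm d\in\Z=\tilde H_{j+1}(S^{j+1})$ — forces $\phi=(\alpha,\pm d)$ with $\alpha\in\pi_{j+1}(M(\Z_d,j))$. If $\alpha=0$ then $X\simeq M(\Z_d,j)\vee\bigl(S^{j+1}\cup_{\pm d}e^{j+2}\bigr)=Y$, so everything reduces to proving $\alpha=0$. From the cofibre sequence $S^j\xrightarrow{d}S^j\to M(\Z_d,j)$ one finds $\pi_{j+1}(M(\Z_d,j))\cong\pi_1^s(\mathbb S/d)$, which is $0$ when $d$ is odd and $\Z_2$, generated by $i\circ\eta$ (the bottom-cell inclusion composed with the Hopf map), when $d$ is even. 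In particular the case ``$d$ odd'' is complete.

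Now suppose $d$ is even. I claim $\alpha=i\circ\eta$ would force $Sq^2\colon H^j(X;\Z_2)\to H^{j+2}(X;\Z_2)$ to be nonzero: a functional $Sq^2$ computation with the attaching map $\phi$ — using that $d$ is even to kill the $S^{j+1}$-summand contribution mod $2$, and the standard fact that $\eta$ is detected by $Sq^2$ (as in $\C P^2$) — shows $Sq^2$ sends the generator of $H^j(X;\Z_2)$ to that of $H^{j+2}(X;\Z_2)$. On the other hand one computes $Sq^2$ on $X=M(\Z_k,m)\wedge M(\Z_l,n)$ directly: with $\Z_2$ coefficients Künneth gives $H^*(X)\cong H^*(M(\Z_k,m))\otimes H^*(M(\Z_l,n))$, and since $Sq^2$ vanishes on $H^m(M(\Z_k,m);\Z_2)$ for dimension reasons while $Sq^1\colon H^m(M(\Z_k,m);\Z_2)\to H^{m+1}(M(\Z_k,m);\Z_2)$ is the mod $2$ Bockstein — nonzero iff $k\equiv2\pmod4$, and likewise for $M(\Z_l,n)$ — the Cartan formula shows $Sq^2$ on the bottom class of $X$ equals $Sq^1\otimes Sq^1$, which is nonzero iff $k\equiv2\pmod4$ and $l\equiv2\pmod4$. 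When $4\mid k$ this is impossible, so $Sq^2=0$, hence $\alpha=0$, hence $X\simeq Y$; together with the ``$d$ odd'' case this proves the proposition. (The same argument works if $4\mid l$, by symmetry of the smash product.)

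The main obstacle is the claim in the last paragraph linking $\alpha$ to $Sq^2$: one must convert the secondary attaching datum $\alpha\in\pi_{j+1}(M(\Z_d,j))$ into a statement about the primary operation $Sq^2$ on $H^*(X;\Z_2)$, which requires the functional operation calculation to be carried out with some care. By contrast the ``$d$ odd'' case is essentially formal (the relevant stable stem is zero), and the hypothesis is precisely the condition ``$Sq^2=0$ on $X$'', i.e.\ the condition that keeps one away from the genuine counterexample $M(\Z_2,m)\wedge M(\Z_2,n)$, whose failure to split is witnessed by exactly the nonzero $Sq^2$ computed above.
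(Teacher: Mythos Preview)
The paper does not prove this proposition: it is quoted as Corollary~6.6 of Neisendorfer~\cite{JN} and used only as a black box, together with the remark that the conclusion fails for $k=l=2$. There is therefore no argument in the paper to compare your attempt against.

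For what it is worth, your outline is the standard one and is correct in the range $m,n\ge2$ you restrict to (which is all the paper needs). The step you single out as the main obstacle, that $\alpha=i\circ\eta$ forces $Sq^2\ne0$ on $X$, does not actually require functional operations. Collapsing the $S^{j+1}$ summand gives a map $X\to Z:=M(\Z_d,j)\cup_{i\eta}e^{j+2}$; since $d$ is even, the long exact sequence of the pair shows this map is an isomorphism on $H^j$ and $H^{j+2}$ with $\Z_2$ coefficients, so it suffices to see $Sq^2\ne0$ on $Z$. But the inclusion $i\colon S^j\to M(\Z_d,j)$ extends to a map $\Sigma^{j-2}\C P^2=S^j\cup_\eta e^{j+2}\to Z$ carrying the top cell homeomorphically to the top cell, hence inducing isomorphisms on $H^j$ and $H^{j+2}$, and $Sq^2$ is visibly nonzero on $\Sigma^{j-2}\C P^2$. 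With that filled in, your argument is complete.
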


In particular we have, $$M(\Z_2,m)\wedge M(\Z_2,n)\ne M(\Z_2,m+n)\vee M(\Z_2,m+n+1),$$  for positive integers $m,n$. We can see this already for $m=n=1$, as $$M(\Z_2,1)\wedge M(\Z_2,1)=\mathbb{RP}^2 \wedge \mathbb{RP}^2\ne \Sigma \mathbb{RP}^2 \vee \Sigma^2\mathbb{RP}^2=M(\Z_2,2)\vee M(\Z_2,3).$$

In view of removing the 2-avoidability condition from Proposition~\ref{c}, Question~\ref{d} was posed, whose affirmative answer would prove the original Gromov's Conjecture for the products of all 2-dimensional groups. 
In this paper we give positive answer to Question~\ref{d} restricting our attention to certain classes of torsion free One-relator groups. We considered the following one-relator groups:
 \begin{enumerate}
        \item Baumslag Solitar groups, $$B(m,n)=\langle a,t | ta^{m}t^{-1}=a^{n}\rangle,m,n\in \Z$$
        \item Baumslag Strebel groups, $$G_{m,n,k}=\langle a,t | t^ka^{m}t^{-k}=a^{n}\rangle,m,n,k\in \Z$$
        \item Baumslag Gersten groups, $$BG(m,n)=\langle a,t | tat^{-1}a^mta^{-1}t^{-1}=a^{n}\rangle,m,n\in \Z$$ 
        \item Meskin groups, $$\Gamma=\langle s_1,s_2,...,s_n| s_1^{k_1}s_2^{k_2}...s_n^{k_n}\rangle, n\ge 2, k_i\ge 1.$$
    \end{enumerate} 
    
    Let $\mathscr{F}$ be the union of all above families of groups.

\ 
 
 The main result in this paper is 
     \begin{theorem}\label{m}
   All groups in the family $\mathscr{F}$ are admit a index 2 subgroup which is 2-avoidable.
    \end{theorem}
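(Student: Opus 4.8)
The plan is to combine the standard description of the abelianization of a one-relator group with the Reidemeister--Schreier rewriting process. Recall that for $G=\langle x_1,\dots,x_n\mid w\rangle$ one has $H_1(G)\cong\Z^n/\langle(\sigma_1,\dots,\sigma_n)\rangle$, where $\sigma_i$ is the exponent sum of $x_i$ in $w$; putting the $1\times n$ matrix $(\sigma_1,\dots,\sigma_n)$ into Smith normal form gives $H_1(G)\cong\Z^{n-1}\oplus\Z/d$ with $d=\gcd(\sigma_1,\dots,\sigma_n)$ (convention $\Z/0=\Z$). The torsion subgroup $\Z/d$ has $\Z_2$ as a direct summand exactly when $v_2(d)=1$, so $G$ is $2$-avoiding iff $v_2(d)\neq1$. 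Reading off exponent sums, $d=|m-n|$ for $B(m,n)$, $G_{m,n,k}$ and $BG(m,n)$, while $d=\gcd(k_1,\dots,k_n)$ for the Meskin group; hence every member of $\mathscr F$ with $v_2(d)\neq1$ is already $2$-avoiding, and the substantive case is $v_2(d)=1$, where we produce an index-$2$ subgroup $K$ with $H_1(K)$ free of a $\Z_2$ summand.

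Assume $v_2(d)=1$. We take $K=\ker\phi$ for a well-chosen surjection $\phi\colon\Gamma\to\Z_2$ and compute $H_1(K)$ by Reidemeister--Schreier with a two-element Schreier transversal: this gives a presentation of $K$ with one more generator than $\Gamma$ and two relators, and abelianizing, followed by a Smith-normal-form computation of the resulting integer matrix, yields $H_1(K)$. For a Meskin group with $v_2(\gcd k_i)=1$ every $k_i$ is even, say $k_i=2l_i$; take $\phi(s_i)=1$ for all $i$ (well defined since $\phi(w)=\sum k_i$ is even) and transversal $\{1,s_1\}$. The Schreier generators are $s_1^2$ together with $s_js_1^{-1}$ and $s_1s_j$ for $2\le j\le n$, and rewriting $w$ and $s_1ws_1^{-1}$ yields two relators that both abelianize to the single relation $l_1[s_1^2]+\sum_{j\ge2}l_j\big([s_js_1^{-1}]+[s_1s_j]\big)=0$. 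Thus $H_1(K)\cong\Z^{2n-2}\oplus\Z/\gcd(l_1,\dots,l_n)$, and since $\gcd(l_i)=\tfrac12\gcd(k_i)$ is odd, $K$ is $2$-avoiding.

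For $B(m,n)$, $G_{m,n,k}$ and $BG(m,n)$ with $v_2(|m-n|)=1$ the scheme is the same, but the homomorphism has to be adapted to the residues of $m,n$ (and of $k$, for $G_{m,n,k}$) modulo powers of $2$: we use either $a\mapsto0,\ t\mapsto1$ or $a\mapsto1,\ t\mapsto0$ --- both well defined, since $\sigma_a=m-n$ is even and $\sigma_t=0$ --- with Schreier transversal $\{1,t\}$ in the first case and $\{1,a\}$ in the second. One then checks, by a case analysis on the residue pattern, that at least one of these two choices makes the torsion subgroup of $H_1(K)$ free of a $\Z_2$ summand; concretely, one lands on torsion of the form $\Z/(\text{odd})$, on a cyclic group whose order is divisible by $4$, or on $\Z/g\oplus\Z/(gc)$ with $g$ odd.

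The conceptual input --- the $v_2(d)$ dichotomy --- is immediate; the real work, and the main obstacle, is the Baumslag--Solitar-type analysis. There the Reidemeister--Schreier rewriting, and hence the abelianized relation matrix and its Smith normal form, genuinely depends on the $2$-adic data of $m,n$ (and $k$), so the argument breaks into several cases, and in each one must select, among the available homomorphisms $\Gamma\to\Z_2$, one that destroys the offending $\Z_2$ summand; proving that some such choice always exists is the crux. The degenerate sub-cases (for instance $m=n$, where the relator has trivial exponent sums) are handled directly.
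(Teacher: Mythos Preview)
Your plan is essentially the paper's, in different packaging: Reidemeister--Schreier rewriting followed by abelianization yields the same presentation matrix for $H_1$ of the double cover that the paper obtains via Fox derivatives and Hempel's version of Fox's theorem (up to a bookkeeping $\Z$ summand), and both then apply Smith normal form. Your choices of $\phi$ and the parity case-split on $m,n,k$ coincide with the paper's, and your Meskin computation is complete and agrees with it. Where your proposal stops short of a proof is exactly where you flag it: for the Baumslag--Solitar, Strebel and Gersten families you only assert that ``one then checks'' some $\phi$ works, whereas the paper actually performs each $2\times2$ matrix computation and records the resulting torsion ($\Z_{8p}$, $\Z_4$, etc.); to match the paper you would need to write these out. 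One place you are in fact more careful than the paper: you correctly identify non-$2$-avoidance as $v_2(d)=1$, i.e.\ $d\equiv2\pmod4$, while the paper's lemmas reduce only to the case $d=2$; the same homomorphisms $\phi$ handle $d=6,10,\dots$ as well, but neither you nor the paper spells this out.
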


    It is well known that one-relator group is torsion free if and only if the relator is not a proper power of some word in the generators. Also one-relator group with torsion is virtually torsion free. Torsion free One-relator groups are 2-dimensional by the Lyndon-Cockeroft theorem~\cite{L},\cite{C}. They also satisfy Strong Novikov Conjecture, because they have finite asymptotic dimension~\cite{BD}, \cite{Dr3}, \cite{T}, thus satisfying the coarse Baum-Connes conjecture for both $KU$ and $KO$. Baum-Connes conjecture implies Strong Novikov Conjecture. Thus for finite collection of groups $\Gamma_i\in \mathscr{F}$, we can consider finite index subgroups $\Gamma'_i\subset \Gamma_i$, which are 2-avoidable. Since $\Gamma'_i$ correspond to some finite cover $X_i$ of $B\Gamma$, their universal covers coincide, which is contractible as $\Gamma_i$'s are 2-dimensional. Thus $\Gamma'_i$'s are 2-dimensional and we can apply Proposition~\ref{c} to $\Gamma'_i$'s. We get a corollary to Theorem~\ref{m}:
    \begin{cor}
    Gromov Conjecture holds for closed spin manifolds whose fundamental groups are finite products $\Gamma=\Gamma_1\times\dots\times\Gamma_k$, 
for $\Gamma_i\in \mathscr{F}$.
    \end{cor}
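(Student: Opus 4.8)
\noindent\emph{Proof proposal.} The plan is to realize every subgroup we need as the kernel of an explicitly chosen homomorphism of $G$ onto $\Z_2$ (or, in the residual cases, onto $\Z_2\times\Z_2$ or $\Z_4$), present it by the Reidemeister--Schreier method, and read its first homology off the Smith normal form of the abelianized presentation matrix. The reduction is standard: an index-$2$ subgroup of $G$ is exactly the kernel of some surjection $\phi\colon G\to\Z_2$, each such $\phi$ factors through $H_1(G)$, and if $G=\langle x_1,\dots,x_r\mid w\rangle$ then Reidemeister--Schreier presents $K=\ker\phi$ on $2r-1$ generators with two relators, the rewrites of $w$ and of $xwx^{-1}$ for a transversal element $x$. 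Abelianizing these two words --- equivalently, evaluating the Fox derivatives of the rewritten relators --- produces a $2\times(2r-1)$ integer matrix $M$ with $H_1(K)\cong\Z^{\,2r-1}/M\Z^{2}$, and $K$ is $2$-avoidable precisely when no invariant factor of $M$ is $\equiv 2\pmod4$ (i.e.\ the $2$-primary part of $\operatorname{coker}M$ has no $\Z_2$ summand). So the theorem comes down to choosing, family by family and according to the $2$-adic valuations of the exponents in $w$, a $\phi$ whose Smith normal form avoids such an invariant factor; the residual cases run the same machine for an index-$4$ kernel.

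\smallskip\noindent\emph{Baumslag--Solitar, Baumslag--Strebel and Baumslag--Gersten groups.} All have $H_1(G)=\Z\oplus\Z_{|m-n|}$, generated by $t$ and $a$. Taking $\phi(t)=1,\ \phi(a)=0$ with transversal $\{1,t\}$, Reidemeister--Schreier presents the index-$2$ subgroup of $B(m,n)$ as $\langle\alpha,\beta,\tau\mid\beta^{m}=\alpha^{n},\ \tau\alpha^{m}\tau^{-1}=\beta^{n}\rangle$ with $\alpha=a,\ \beta=tat^{-1},\ \tau=t^{2}$; the $\tau$-column of $M$ is zero and the remaining $2\times2$ block has rows $(-n,m)$ and $(m,-n)$, so $H_1(K)\cong\Z\oplus\Z_{\gcd(m,n)}\oplus\Z_{|m^{2}-n^{2}|/\gcd(m,n)}$, which a short valuation check shows is $2$-avoidable unless $\gcd(m,n)\equiv 2\pmod4$ (when one of $m,n$ is even the torsion is odd, and when both are odd $|m^{2}-n^{2}|/\gcd(m,n)$ is divisible by $8$). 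If $\gcd(m,n)\equiv 2\pmod4$ I would instead use $\phi(t)=0,\ \phi(a)=1$, which gives $K\cong\langle\alpha,\tau_0,\tau_1\mid\tau_0\alpha^{m/2}\tau_0^{-1}=\alpha^{n/2},\ \tau_1\alpha^{m/2}\tau_1^{-1}=\alpha^{n/2}\rangle$ (with $\alpha=a^{2}$) and hence $H_1(K)\cong\Z^{2}\oplus\Z_{|m-n|/2}$, which is $2$-avoidable whenever $v_2(m-n)\neq 2$. The only configuration that escapes both choices is $m,n$ both even with $v_2(m-n)=2$ --- so $m/2$ and $n/2$ are odd, e.g.\ $B(2,6)$ --- and there every index-$2$ subgroup carries a $\Z_2$ summand; I would then pass to the index-$4$ kernel of $G\to\Z_2\times\Z_2$, $t\mapsto(1,0),\ a\mapsto(0,1)$, whose presentation abelianizes to a matrix built out of $m/2$ and $n/2$ giving $H_1(K)\cong\Z^{3}\oplus(\text{torsion})$ in which, because $m/2,n/2$ are odd, every invariant factor is odd or divisible by $8$ --- so $K$ is $2$-avoidable. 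The Baumslag--Strebel groups $G_{m,n,k}$ behave identically once the parity of $k$ is tracked (for $k$ even, $t^{k}\in K$, which only changes the $\tau$-exponents, not the abelianized relations), and for $BG(m,n)$ the rewriting of the longer relator $tat^{-1}a^{m}ta^{-1}t^{-1}a^{-n}$ is mechanical and again yields, after abelianizing, a matrix with entries among $\pm m,\pm n$ and zero $\tau$-columns, so the same case split applies verbatim.

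\smallskip\noindent\emph{Meskin groups.} Here $H_1(\Gamma)=\Z^{n-1}\oplus\Z_{d}$ with $d=\gcd(k_1,\dots,k_n)$, the non-orientable surface group $N_g$ being $k_i\equiv 2$. When $d$ is odd $\Gamma$ is already $2$-avoiding, and a routine index-$2$ cover (an even generator, or a pair of odd ones, mapping onto $\Z_2$) is $2$-avoiding too. When $d$ is even I use the ``orientation character'' $\phi(s_i)=1$ for all $i$, well defined since $\sum k_i$ is then even, generalizing the orientation double cover $\Sigma_{g-1}\to N_g$: Reidemeister--Schreier presents $K$ on $2n-1$ generators with two relators that abelianize to the \emph{same} relation whose coefficient vector has content $d/2$, so $H_1(K)\cong\Z^{2n-2}\oplus\Z_{d/2}$, which is $2$-avoidable unless $v_2(d)=2$. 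In that last case ($4\mid k_i$ for all $i$) I take instead the cyclic index-$4$ cover $\phi\colon\Gamma\to\Z_4$ with $s_n\mapsto 1$ and the other generators $\mapsto 0$, after reordering so $v_2(k_n)=2$; its presentation abelianizes to a $4\times(4n-3)$ matrix of content $\gcd(k_1,\dots,k_{n-1},k_n/4)$, which is odd, and a short Smith-normal-form computation shows every nontrivial invariant factor is divisible by $4$, so $K$ is $2$-avoidable. Feeding these subgroups into \propref{c} together with the finite-index descent lemma then gives the Corollary.

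\smallskip\noindent\emph{Main obstacle.} The Reidemeister--Schreier bookkeeping is routine; the real content is the $2$-adic arithmetic of the Smith normal form --- making sure no invariant factor is exactly twice an odd number --- which is exactly what dictates the choice of $\phi$ from the parities of the exponents. The subtle point is that in the ``doubly even'' configurations above no index-$2$ subgroup is $2$-avoidable, so one genuinely cannot always keep the index at $2$: the index-$4$ covers repair this and still show that every group in $\mathscr F$ is \emph{virtually} $2$-avoidable, which is all \propref{c} and the finite-index descent lemma require.
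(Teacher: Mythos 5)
Your overall scheme is the same as the paper's: realize the subgroup as the kernel of an explicit surjection onto $\Z_2$, compute $H_1$ of the cover by Reidemeister--Schreier/Fox rewriting and Smith normal form, and feed the result into \propref{c} via the finite-index descent lemma; your Baumslag--Solitar and Meskin computations are correct (and in fact treat more cases than the paper does). But two steps would fail as written. First, the claim that the Baumslag--Strebel groups with $k$ even and the Baumslag--Gersten groups ``behave identically''/``verbatim'' under the cover $\phi(t)=1,\ \phi(a)=0$ is false: whenever the $t$-letters surrounding the block $a^{m}$ cancel inside the kernel (as happens for $t^{k}$ with $k$ even, and for the subword $tat^{-1}a^{m}ta^{-1}t^{-1}$ of the Baumslag--Gersten relator), that block is read at the base coset, so the two rewritten relators abelianize to $(m-n)\alpha$ and $(m-n)\beta$ rather than to the rows $(-n,m)$ and $(m,-n)$; the cover then has $H_1\cong\Z\oplus\Z_{|m-n|}\oplus\Z_{|m-n|}$, which is \emph{not} $2$-avoiding precisely in the critical case $|m-n|\equiv 2\pmod 4$. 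This is exactly why the paper changes the homomorphism $\theta$ with the parities of $n$ and $k$ (sending $a$, or both $a$ and $t$, to the transposition); with the $a$-cover or the diagonal cover these two families do work out, but your stated case split does not reach them. Second, virtual $2$-avoidability alone does not yield the Corollary: \propref{c} also requires each factor to be $2$-dimensional and to satisfy the Strong Novikov Conjecture, and these properties must be checked for the finite-index subgroups as well. The paper does this by noting the relators are not proper powers (so the groups are torsion free, hence $2$-dimensional by Lyndon--Cockcroft) and that one-relator groups have finite asymptotic dimension (so coarse Baum--Connes, hence SNC, holds); your proposal never addresses these hypotheses.

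A smaller point: the ``doubly even'' detour is unnecessary for the Corollary. In those configurations (e.g.\ $B(2,6)$, or Meskin groups with $4\mid d$) the torsion of $H_1(\Gamma)$ has $2$-adic valuation at least $2$, so $\Gamma$ is itself $2$-avoiding and can be fed into \propref{c} directly --- only \emph{some} $2$-avoiding finite-index subgroup is needed, and the group itself qualifies. Your observation that no index-$2$ subgroup of $B(2,6)$ is $2$-avoiding is correct (and shows the literal statement of Theorem~\ref{m}, which promises index $2$ for every group in $\mathscr F$, is too strong), but the index-$4$ computations you rely on there are only sketched, unverified, and not needed for the result.
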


\section{Preliminaries}
In this section we collect some basic tools used in the proof of Theorem~\ref{m}. 

\subsection{Smith Normal form of matrix}

 We recall Smith Normal form of a matrix. For $m, n\ge 1$, let $M_{m\times n}(\Z)$ be the set of $m\times n$ matrices with
integer entries

\begin{prop}\label{1}
(The Smith Normal form). Given a nonzero matrix $A\in M_{m\times n}(\Z),$ there exist invertible matrices $P\in M_{m\times m}(\Z)$ and $Q\in M_{n\times n}(\Z)$ such that $$PAQ=\begin{pmatrix}  
 n_1 & 0 & \cdots & 0 & 0 &  \cdots & 0\\         
 0 & n_2 & \cdots & 0 & 0 &\cdots & 0 \\
 \vdots & \vdots & \ddots & \vdots & \vdots & \ddots & \vdots \\
 0 & 0 & \cdots & n_k & 0 &\cdots & 0 \\
 0 & 0 & \cdots & 0 & 0 &\cdots & 0 \\  
 \vdots & \vdots & \ddots & \vdots & \vdots & \ddots & \vdots \\
        0 & 0 & \cdots & 0 & 0 &\cdots & 0 
\end{pmatrix}.
$$ where the integers $n_i\ge 1$ are unique up to sign and satisfy $n_1|n_2|\cdots |n_k.$ Further, one can compute the integers $n_i$ by the recursive formula $n_i=\frac{d_i}{d_{i-1}}$, where $d_i$ is the greatest common divisor of all $i\times i$-minors of the matrix $A$ and $d_0$ is defined to be 1.

\end{prop}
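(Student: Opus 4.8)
The plan is to prove existence and uniqueness as two separate arguments, and to obtain the recursive formula $n_i = d_i/d_{i-1}$ as a byproduct of the uniqueness half. For existence I would induct on $\min(m,n)$, using only the integer elementary operations — swapping two rows or columns, multiplying a row or column by $-1$, and adding an integer multiple of one row (column) to another — each of which is realized by left- or right-multiplication by a unimodular matrix over $\Z$, and so keeps us inside the class of matrices of the form $PAQ$. The reduction step is to pick a nonzero entry of least absolute value and move it to the $(1,1)$ position by swaps; writing every entry of the first row and first column as $a_{11}q + r$ with $0 \le r < |a_{11}|$ and subtracting the corresponding multiples of the first column or row, either these entries all become $0$ or a strictly smaller nonzero entry appears, which we again move to the corner. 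Since $|a_{11}|$ is a positive integer that strictly decreases whenever a remainder is nonzero, the process terminates with $a_{11}$ dividing every entry in its row and column, all of which we then clear to $0$.

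To secure the divisibility chain I must also force $a_{11}$ to divide every entry of the lower-right block: if some entry $a_{ij}$ fails to be a multiple, adding row $i$ to row $1$ reintroduces a non-multiple into the first row and restarts the shrinking process, so after finitely many steps $a_{11}$ divides the whole block. Then $A$ is equivalent to $\mathrm{diag}(a_{11}, A')$ with $a_{11}$ dividing all entries of $A'$; applying the inductive hypothesis to $A'$ and noting that every subsequent pivot is a $\Z$-linear combination of entries of $A'$, hence divisible by $a_{11}$, yields the full chain $n_1 \mid n_2 \mid \cdots \mid n_k$.

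For uniqueness and the minor formula I would first establish the invariance lemma: if $P$ and $Q$ are invertible over $\Z$, then $d_i(PAQ) = d_i(A)$, where $d_i(A)$ is the gcd of all $i\times i$ minors of $A$. This follows from Cauchy--Binet, since every $i\times i$ minor of $PA$ is an integer linear combination of the $i\times i$ minors of $A$, giving $d_i(A) \mid d_i(PA)$; applying the same to $A = P^{-1}(PA)$ gives the reverse divisibility and hence equality, and the argument on the right with $Q$ is identical. Computing $d_i$ for the diagonal output $D = \mathrm{diag}(n_1,\dots,n_k,0,\dots)$ is then elementary: its nonzero $i\times i$ minors are exactly the products $\prod_{j\in S} n_j$ over $i$-element sets $S \subseteq \{1,\dots,k\}$, and the divisibility chain forces $n_1 n_2 \cdots n_i$, the minor from $S=\{1,\dots,i\}$, to divide all of them, so $d_i(D) = n_1 \cdots n_i$. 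Combining with the invariance lemma gives $d_i(A) = n_1 \cdots n_i$, an intrinsic quantity, and dividing consecutive terms yields $n_i = d_i/d_{i-1}$ together with uniqueness up to sign.

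I expect the main obstacle to be the existence step — specifically the bookkeeping that the least-absolute-value entry strictly decreases and that the extra maneuver forcing $a_{11}$ to divide the entire remaining block both terminates and delivers the divisibility chain. The uniqueness half, by contrast, reduces cleanly to the Cauchy--Binet invariance of $d_i$ once that lemma is in place.
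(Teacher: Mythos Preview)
Your argument is correct and is the standard textbook proof of the Smith Normal Form: existence via elementary row/column operations driven by a decreasing Euclidean norm on the pivot, followed by uniqueness through the invariance of the determinantal divisors $d_i$ under unimodular equivalence. The only point worth tightening is the termination when you add row $i$ to row $1$ to force $a_{11}$ to divide the residual block: you should say explicitly that this step strictly decreases the minimal nonzero absolute value among all entries (or equivalently replaces $a_{11}$ by $\gcd(a_{11},a_{ij})<|a_{11}|$), so the whole outer loop is governed by a single well-founded measure.

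As for comparison with the paper: the paper does not prove this proposition at all. It simply records the statement as background and refers the reader to Hungerford \cite{HU}, Proposition~2.11, for a proof. Your outline is essentially the argument one finds there, so there is no methodological divergence to discuss; you have supplied what the paper deliberately outsourced.
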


We refer to \cite{HU} for a proof of above [HU, Proposition 2.11, p.339].
\begin{cor}\label{2}
 Given a nonzero matrix $A\in M_{n\times n}(\Z),$ with $det(A)\ne 0$ there exist invertible matrices $P\in M_{n\times n}(\Z)$ and $Q\in M_{n\times n}(\Z)$ such that $$PAQ=\begin{pmatrix}  
 1 & 0 & \cdots & 0 & 0 &  \cdots & 0\\         
 0 & 1 & \cdots & 0 & 0 &\cdots & 0 \\
 \vdots & \vdots & \ddots & \vdots & \vdots & \ddots & \vdots \\
 0 & 0 & \cdots & 1 & 0 &\cdots & 0 \\
 0 & 0 & \cdots & 0 & n_1 &\cdots & 0 \\  
 \vdots & \vdots & \ddots & \vdots & \vdots & \ddots & \vdots \\
        0 & 0 & \cdots & 0 & 0 &\cdots & n_k 
\end{pmatrix}.
$$ where the integers $n_i\ge 2$ are unique up to sign and satisfy $n_1|n_2|\cdots |n_k.$

\end{cor}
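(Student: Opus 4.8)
The plan is to derive the corollary directly from the Smith Normal Form of \propref{1}, the only extra inputs being that $A$ is square and nonsingular. First I would apply \propref{1} to $A$: since $\det(A)\neq 0$ the matrix $A$ is in particular nonzero, so there exist invertible $P,Q\in M_{n\times n}(\Z)$ with $PAQ=\operatorname{diag}(s_1,\dots,s_r,0,\dots,0)$, where $s_1,\dots,s_r\ge 1$ satisfy $s_1\mid s_2\mid\cdots\mid s_r$ and there are $n-r$ trailing zeros on the diagonal.

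Next I would use the determinant to rule out any zero diagonal entry. Since $P$ and $Q$ are invertible over $\Z$, we have $\det(P),\det(Q)\in\{\pm 1\}$, hence $\det(PAQ)=\det(P)\det(A)\det(Q)=\pm\det(A)\neq 0$. On the other hand $PAQ$ is diagonal, so $\det(PAQ)=s_1\cdots s_r\cdot 0^{\,n-r}$. For this product to be nonzero we must have $r=n$; thus $PAQ=\operatorname{diag}(s_1,\dots,s_n)$ with every $s_i\ge 1$ and $s_1\mid\cdots\mid s_n$.

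It then remains to isolate the entries equal to $1$ as an initial block. The key observation is that the divisibility chain forces the value $1$ to occur only at the top: if $s_i=1$ for some $i$, then from $s_1\mid s_i$ together with $s_1\ge 1$ we get $s_1=1$, and more generally the set of indices with $s_i=1$ is an initial segment $\{1,\dots,j\}$. Writing $n_1:=s_{j+1},\dots,n_k:=s_{n}$ (so that $k=n-j$), each $n_i\ge 2$ and $n_1\mid\cdots\mid n_k$, which is exactly the claimed block form with a leading identity block of size $j$; the degenerate cases $j=0$ (no identity block) and $j=n$ (so $A\in GL_n(\Z)$ and $PAQ=I_n$) are covered automatically. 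Finally, the uniqueness up to sign of the $n_i$ is inherited verbatim from the uniqueness clause of \propref{1}.

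I do not expect a genuine obstacle here, as the statement is a formal consequence of \propref{1}; the only point requiring any care is the determinant computation forcing $r=n$, which rests on the fact that integer-invertible matrices have determinant $\pm 1$ together with multiplicativity of the determinant.
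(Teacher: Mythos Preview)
Your proposal is correct and is exactly the argument the paper implicitly has in mind: the corollary is stated without proof in the paper, as an immediate consequence of \propref{1}, and your write-up spells out precisely those details (nonsingularity forces $r=n$ via multiplicativity of the determinant, and the divisibility chain pushes the $1$'s to the top).
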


\begin{prop}\label{3}
(Invariant Factor Decomposition of Finitely generated Abelian groups). If $G$ is finitely presented abelian group, and if $A\in M_{m\times n}(\Z)$ is the presentation matrix of $G$ then, $G\cong \Z^{l}\times\Z_{n_1}\times\Z_{n_2}\times...\times \Z_{n_k}$, where $n_i$'s are the invariant factors of $A$, $l$=\#(generators in presentation of $G$) $-$ \#(nonzero entries in Smith normal Form of $A$).
\end{prop}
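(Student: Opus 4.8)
The plan is to realize $G$ as the cokernel of an integer matrix built from its presentation, to reduce that matrix to Smith normal form via \propref{1}, and then to read off the decomposition directly from the resulting diagonal matrix.

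First I would fix the standard dictionary between presentations and matrices. If $G$ is generated by $n$ elements $x_1,\dots,x_n$ subject to the $m$ relations recorded by the rows of $A\in M_{m\times n}(\Z)$, then the subgroup of relations is the integer span of these rows, namely $\im(A^T)$ for the map $A^T\colon\Z^m\to\Z^n$ sending the $i$-th basis vector to the $i$-th row of $A$. Hence $G\cong\Z^n/\im(A^T)=\Coker(A^T)$, and the number of generators equals $n$, the number of columns of $A$.

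The step that carries the argument is the invariance of the cokernel under the operations realizing the Smith normal form: I claim $\Coker(A^T)\cong\Coker\big((PAQ)^T\big)$ for any invertible $P\in M_{m\times m}(\Z)$ and $Q\in M_{n\times n}(\Z)$. Transposing, $(PAQ)^T=Q^TA^TP^T$, where $P^T$ and $Q^T$ are automorphisms of $\Z^m$ and $\Z^n$. Since $P^T$ is surjective we get $\im\big((PAQ)^T\big)=Q^T\big(\im(A^T)\big)$, and since $Q^T$ is an automorphism of $\Z^n$ it descends to an isomorphism of the quotients $\Z^n/\im(A^T)$ and $\Z^n/\im\big((PAQ)^T\big)$. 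Conceptually, left multiplication by $P$ merely replaces one generating set of the relation lattice by another and leaves the lattice itself unchanged, while right multiplication by $Q$ is a change of basis of the ambient $\Z^n$; neither alters the isomorphism type of the quotient. This is the only genuinely conceptual point, and I expect it to be the main point requiring care; the remainder is bookkeeping.

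With this invariance established, I would apply \propref{1} to choose $P,Q$ with $PAQ=D$ diagonal, its nonzero entries being the invariant factors $n_1\mid n_2\mid\dots\mid n_k$. The nonzero columns of $D^T$ are exactly $n_1e_1,\dots,n_ke_k$, so $\im(D^T)=n_1\Z\oplus\dots\oplus n_k\Z\oplus 0$ inside $\Z^n$, and the quotient splits coordinatewise:
\[
G\cong\Coker(D^T)\cong\Z_{n_1}\times\dots\times\Z_{n_k}\times\Z^{\,n-k}.
\]
Finally I would match the free rank with the statement: the number of nonzero entries in the Smith normal form is $k$ and the number of generators is $n$, so $l=n-k$, exactly as asserted. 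Uniqueness of the $n_i$ is already furnished by \propref{1}.
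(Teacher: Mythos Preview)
Your argument is correct and is essentially the standard textbook proof: pass to Smith normal form via \propref{1}, observe that invertible row and column operations do not change the isomorphism type of the cokernel, and read off the cyclic factors from the diagonal. The paper itself does not supply a proof of this proposition; it simply cites \cite{DF}, Theorem~3, p.~158, where precisely this argument appears. So there is nothing to compare beyond noting that you have written out what the paper delegates to a reference.
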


Proof of Proposition~\ref{3} can be found in [DF, Theorem 3, p.158]

\subsection{About One relator groups}

We collect some results about One relator groups. Let $$\Gamma=\langle s_1,s_2,...,s_n| r\rangle$$ where $r\in F(S)$ and $S=\{s_1,s_2,...,s_n\}$ be an one-relator group. We assume that $n>1$ and all the generators appear in $r$, $r$ is cyclically reduced and $r$ is not a proper power. The last condition implies $\Gamma$ is torsion-free. Clearly all groups in $\mathscr{F}$ satisfy the above conditions thus torsion-free.
\begin{lemma}\label{lem}
    If $\Gamma$ is torsion free one-relator group which is not 2-avoiding, then $H_1(\Gamma)$ has only one $\Z_2$ summand.
\end{lemma}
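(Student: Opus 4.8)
The plan is to read the first homology $H_1(\Gamma)$ directly off the relator matrix. Since $\Gamma = \langle s_1,\dots,s_n \mid r\rangle$ with all generators occurring in $r$, the abelianization is presented by a single relation: writing $e_i$ for the exponent sum of $s_i$ in $r$, we have $\Gamma^{ab} \cong \Z^n / \langle (e_1,\dots,e_n)\rangle$. The presentation matrix is thus the single row vector $A = (e_1\ e_2\ \cdots\ e_n)$, a $1\times n$ integer matrix. First I would invoke Proposition~\ref{1} (Smith Normal Form): the only nonzero invariant factor of $A$ is $d_1 = \gcd(e_1,\dots,e_n)$, since $A$ has rank one and $d_1$ is the gcd of all $1\times 1$ minors. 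Then by Proposition~\ref{3}, with $l = n - 1$ and a single torsion invariant factor, we get $H_1(\Gamma) \cong \Z^{n-1} \times \Z_{g}$ where $g = \gcd(e_1,\dots,e_n)$ (interpreting $\Z_1$ as trivial when $g=1$).

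With this normal form in hand the lemma is immediate: the torsion subgroup of $H_1(\Gamma)$ is cyclic, namely $\Z_g$. If $\Gamma$ is not $2$-avoiding, then by definition $\Z_2$ is a direct summand of $H_1(\Gamma)$, which forces $\Z_2$ to be a summand of the torsion part $\Z_g$, hence $2 \mid g$. But a cyclic group $\Z_g$ contains at most one subgroup isomorphic to $\Z_2$ and cannot split off two independent copies of $\Z_2$ (its $2$-torsion is itself cyclic). Therefore $H_1(\Gamma)$ has exactly one $\Z_2$ summand in any invariant-factor decomposition, which is the assertion of Lemma~\ref{lem}.

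The only subtlety worth stating carefully is the uniqueness of the $\Z_2$ summand count, i.e.\ that ``$H_1(\Gamma)$ has only one $\Z_2$ summand'' is well-defined despite the non-uniqueness of direct-sum decompositions of abelian groups. I would phrase this via the $2$-rank: $\dim_{\F_2} (H_1(\Gamma) \otimes \F_2) = (n-1) + \varepsilon$ where $\varepsilon = 1$ if $g$ is even and $0$ otherwise, while $\dim_{\F_2}(H_1(\Gamma)/2H_1(\Gamma))$ accounts for both the free part and the torsion; the number of $\Z_2$-summands in a decomposition into cyclic groups of prime-power or infinite order is the number of invariant factors divisible by $2$ but not $4$, which here is at most one since there is a single torsion invariant factor. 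So the genuine content is just: one relator $\Rightarrow$ one torsion invariant factor $\Rightarrow$ cyclic torsion $\Rightarrow$ at most one $\Z_2$.

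Honestly, there is no real obstacle here — the statement is essentially a restatement of the Smith Normal Form computation for a $1 \times n$ matrix. The ``hard part,'' such as it is, is purely bookkeeping: being precise that the hypothesis $n > 1$ and ``all generators appear in $r$'' are what guarantee the free rank is exactly $n-1$ and that there is at most one (rather than zero) torsion factor to worry about, and making sure the degenerate case $g = 1$ (i.e.\ $\Gamma$ already $2$-avoiding, so the hypothesis of the lemma is vacuous) is handled cleanly.
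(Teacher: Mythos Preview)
Your proof is correct and follows essentially the same route as the paper's own argument: both compute $H_1(\Gamma)\cong\Z^{n-1}\oplus\Z_g$ with $g=\gcd$ of the exponent sums via the Smith Normal Form of the single-row (or column) relator matrix, and then observe that cyclic torsion forces at most one $\Z_2$ summand. If anything, your version is slightly more careful: the paper asserts that non--$2$-avoidance forces $g=2$, whereas the correct condition (which you implicitly capture and the paper's subsequent remark about $4\nmid k_i$ also reflects) is $g\equiv 2\pmod 4$; this discrepancy is harmless for the lemma as stated.
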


\begin{proof}
    The presentation 2-complex of $\Gamma$ has one skeleton wedge of $S^1$'s corresponding to each $s_i$, with a 2-cell attached according to the word $r$. Thus it's cellular chain complex is given by, $$0 \to \Z \xrightarrow{d_2} \Z^n \xrightarrow{d_1} \Z\to 0.$$ If after abelianizing the relator $r$ we get $r_{ab}=k_1s_1+k_2s_2+...+k_ns_n$, then $d_2(1)=(k_1,k_2,...,k_n)^T$, where we are treating it as column vector. Clearly $d_1=0$. Let $d:=gcd(k_1,k_2,...,k_n)$. Thus by Smith Normal Form reduction of the boundary matrix, which is $(k_1,k_2,...,k_n)^T$ in our case we get, $$H_1(\Gamma)=\Z^n/(k_1,k_2,...,k_n)\Z=\begin{cases}
\Z^{n-1}\oplus\Z_d & \text{if }  d\ge 2,\\
\Z^{n-1}  & \text{if } d=1.
\end{cases} $$

If $\Gamma$ is not 2-avoidable then $d=2$, hence done. Moreover, we observe that all the $k_i$'s must be even, and there is at least one $k_i$ such that $2\mid k_i$ but $4\nmid k_i$.

 \end{proof}

\subsection{Fox Derivative and Homology of finite covers}

In order to compute $H_1(\Gamma')$ where $\Gamma'\subset \Gamma$ is a finite index subgroup of a group as in our case, we recall Fox derivative of finite presented groups and Fox's main theorem in this direction.
If $S=\{s_1,s_2,\cdots, s_n\}$, there is a unique $\Z$-linear map $$\frac{\partial }{\partial s_i}:\Z[F(S)]\to \Z[F(S)]$$ satisfying the condition:
\begin{itemize}
        \item For $s_j\in S$ we have,  $$\frac{\partial s_j}{\partial s_i}=\delta_{ij}.$$
      
       \item For any word $u=u_1u_2...u_k$, where $u_i \in F(S)$, $$\frac{\partial u}{\partial s_i} = \frac{\partial u_1}{\partial s_i} + u_1 \frac{\partial u_2}{\partial s_i} + u_1 u_2 \frac{\partial u_3}{\partial s_i} + \cdots + u_1 u_2 \dots u_{k-1} \frac{\partial u_k}{\partial s_i}$$

    \end{itemize} 
  The above conditions imply $$\frac{\partial s_i^{-1}}{\partial s_i}= -s_i^{-1},$$ which helps us compute Fox derivatives for any word $u\in F(S)$ very easily. We will see examples in next sections. Using Fox derivatives of relations in a presentation of group $\Gamma$, one can compute $H_1(\Gamma')$ of any finite index subgroup $\Gamma'\subset \Gamma$:

\begin{theorem}\label{f} (FOX) Let $p: X'\to X$ be a $q$ sheeted covering of a finite CW complex $X$, $\theta: \pi_1(X)\to S_q$ the associated permutational representation, and $\pi_1(X)=\langle x_1,...,x_n| r_1,...,r_m\rangle$ any presentation of $\pi_1(X)$. Then the $nq \times mq$ matrix of integers $\theta(J)$, where $J=((\frac{\partial r_i}{\partial x_j}))$ is a presentation matrix, over $\Z$, for $H_1(X')\oplus \Z^{q-1}$.
\end{theorem}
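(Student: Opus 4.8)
The plan is to prove the theorem by building the covering space $X'$ cellularly and identifying its cellular chain complex with the Fox Jacobian specialized through $\theta$. First I would reduce to the presentation $2$-complex. Since $H_1(X')\cong\pi_1(X')^{\mathrm{ab}}$ by Hurewicz, and $\pi_1(X')$ is the subgroup $H\subseteq\pi_1(X)$ determined by $\theta$ (the stabilizer of a point of $\{1,\dots,q\}$), the group $H_1(X')$ depends only on $\pi_1(X)$ together with $\theta$. Hence I may replace $X$ by the presentation $2$-complex $K$ of the given presentation $\langle x_1,\dots,x_n\mid r_1,\dots,r_m\rangle$, which has a single vertex, one $1$-cell for each generator $x_j$, and one $2$-cell for each relator $r_i$; the cover $K'\to K$ corresponding to $H$ then has $\pi_1(K')\cong\pi_1(X')$ and therefore the same $H_1$. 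The advantage of $K$ is that its explicit cell structure makes the Fox computation transparent.

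Next I would describe $K'$ directly from $\theta$. It has $q$ vertices indexed by $\{1,\dots,q\}$; for each generator $x_j$ and each sheet $k$ a $1$-cell running from vertex $k$ to vertex $\theta(x_j)(k)$; and for each relator $r_i$ and each sheet $k$ a $2$-cell lifting the attaching loop of $r_i$ based at $k$. This yields the cellular chain complex $\Z^{mq}\xrightarrow{\partial_2}\Z^{nq}\xrightarrow{\partial_1}\Z^{q}$, with $C_2$, $C_1$, $C_0$ free on (relators $\times$ sheets), (generators $\times$ sheets) and (sheets), respectively, so the displayed dimensions match.

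The heart of the argument, and the step I expect to be the main obstacle, is to show that $\partial_2$ equals the block matrix $\theta(J)$; that is, that the boundary of the lifted $2$-cell of $r_i$ at sheet $k$, expanded in the basis of lifted $1$-cells, has coefficients obtained by applying $\theta$ to the Fox derivatives $\partial r_i/\partial x_j$. The clean route is to first establish, over the \emph{universal} cover $\tilde K$, the standard identification of its equivariant cellular chain complex as the complex of free $\Z[\pi_1]$-modules $\Z[\pi_1]^m\xrightarrow{(\partial r_i/\partial x_j)}\Z[\pi_1]^n\xrightarrow{(x_j-1)}\Z[\pi_1]$, whose differentials are the Fox Jacobian and the augmentation column. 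This rests on the fundamental formula of the free differential calculus, namely $w-1=\sum_j\frac{\partial w}{\partial x_j}(x_j-1)$, combined with a length induction that matches the Fox expansion of $r_i$ term-by-term with the edges traversed when one lifts the attaching path to $\tilde K$. I would then pass to the finite cover by base change $C_*(K')\cong \Z[H\backslash\pi_1]\otimes_{\Z[\pi_1]}C_*(\tilde K)$, under which each group-ring coefficient $g\in\pi_1$ is replaced by the permutation matrix $\theta(g)$ on $\Z^q\cong\Z[H\backslash\pi_1]$; applied entrywise to the Fox Jacobian this produces exactly $\theta(J)$, up to the transpose and orientation conventions implicit in calling it a presentation matrix. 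Keeping track of left- versus right-module conventions and of cell orientations is needed here, but is routine.

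Finally I would extract $H_1(K')$ from the cokernel of $\theta(J)$. The abelian group presented by $\theta(J)$ is $\Coker(\partial_2)=C_1/\im\partial_2$, and since $\partial_1\partial_2=0$ the map $\partial_1$ induces a short exact sequence
\[
0\longrightarrow H_1(K')\longrightarrow \Coker(\partial_2)\longrightarrow \im\partial_1\longrightarrow 0,
\]
because $\ker\partial_1/\im\partial_2=H_1(K')$. Assuming $\theta$ is transitive, so that $K'$ is connected, we have $H_0(K')=\Z^{q}/\im\partial_1\cong\Z$, whence $\im\partial_1$ is a subgroup of $\Z^{q}$ of corank one and therefore free of rank $q-1$. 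As $\im\partial_1\cong\Z^{q-1}$ is free, the sequence splits, giving $\Coker(\theta(J))\cong H_1(K')\oplus\Z^{q-1}$. Since $H_1(K')\cong H_1(X')$ by the first reduction, this is precisely the assertion that $\theta(J)$ is a presentation matrix for $H_1(X')\oplus\Z^{q-1}$.
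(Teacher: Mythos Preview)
Your proposal is a correct and well-organized outline of the standard proof of Fox's theorem: reduce to the presentation $2$-complex, identify the equivariant cellular chain complex of the universal cover with the Fox Jacobian over $\Z[\pi_1]$, base change along $\Z[\pi_1]\to\mathrm{End}(\Z^q)$ via $\theta$ to obtain $\theta(J)$ as $\partial_2$ for the finite cover, and then split off $\im\partial_1\cong\Z^{q-1}$ from $\Coker(\partial_2)$ using connectedness. The only points to be careful with are exactly the ones you flag (left/right conventions, transpose, and the transitivity hypothesis needed for $\im\partial_1\cong\Z^{q-1}$), and none of them is a genuine gap.

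However, there is no proof in the paper to compare against: the paper does not prove Theorem~\ref{f} at all, but merely quotes it and refers the reader to Hempel~\cite{JH} for the argument. Your outline is in fact the approach taken in \cite{JH} (Hempel also works with the presentation $2$-complex, lifts cells sheet by sheet, and identifies the boundary map with the permutation-matrix specialization of the Fox Jacobian), so your proposal supplies precisely the proof the paper omits.
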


The above theorem is a modified version of Fox's original theorem~\cite{F}, which was restated and simplified by John Hempel in \cite{JH} where the proof of Theorem~\ref{f} can be found. In the above theorem $S_q$ denotes the group of permutation of $\{1,2,...,q\}$, or equivalently a subgroup of $GL_q(\Z)$ of matrices which permutes the standard basis of $n$-dimensional vector space. We follow notations and description as in \cite{JH}. For $\sigma\in S_q$ we consider action on right $$i\mapsto i \sigma$$ or the matrix $$P_\sigma=((\delta_{i\sigma,j})).$$ Given $p: X'\to X$ a $q$ sheeted cover, permutational representation $\theta: \pi_1(X)\to S_q$, is defined as follows: Let $H=p_*\pi_1(X',x')$, and let the right cosets of $H$ be $H=H_1,H_2,\cdots, H_q$.  For $g\in \pi_1(X)$, $$i\theta(g)=j \Leftrightarrow H_ig=H_j.$$

It is known that $\theta$ is well defined up to conjugation, $\theta(\pi_1(X))$ is a transitive subgroup of $S_q$, $H = \theta^{-1}\{\sigma \in S_q: 1\sigma = 1\}$, and, in fact, the correspondence $$(p:X'\to X) \leftrightarrow \theta$$
is a one-to-one correspondence between equivalence classes of $q$-sheeted coverings of $X$ with conjugacy classes of representations of $\pi_1(X)$ onto transitive subgroups of $S_q$.

 \section{Baumslag-Solitar Groups} In this section we give a direct proof of Theorem~\ref{m} for Baumslag Solitar groups. First we prove an elementary fact when $B(m,n)$ is non-$2$-avoidable.

 \begin{lemma}\label{bs}
	$B(m,n)$ is non-$2$-avoidable if and only if $|m-n|=2$.
\end{lemma}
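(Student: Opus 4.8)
The plan is to read $H_1(B(m,n))$ straight off the one-relator presentation, exactly as in the proof of \lemref{lem}, and then feed the answer into that lemma's criterion for $2$-avoidability. First I would abelianize the relator $r = ta^{m}t^{-1}a^{-n}$: its exponent sum in $t$ is $1-1=0$ and its exponent sum in $a$ is $m-n$, so $r$ abelianizes to $(m-n)\,a + 0\cdot t$, i.e.\ the presentation matrix of $H_1(B(m,n))$ is the single column $(m-n,\,0)^{T}$. Reducing this to Smith normal form and applying \propref{3} as in \lemref{lem} then gives
\[
H_1(B(m,n))\ \cong\ \begin{cases}\Z\oplus\Z_{|m-n|} & \text{if }|m-n|\ge 2,\\ \Z & \text{if }|m-n|=1,\\ \Z^{2} & \text{if }m=n,\end{cases}
\]
so that, in the notation of \lemref{lem}, the relevant gcd is $d=\gcd(m-n,0)=|m-n|$.

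Next I would invoke \lemref{lem}. Note that $B(m,n)$ is torsion-free for $m,n\neq 0$, being an HNN extension of $\Z=\langle a\rangle$ with associated subgroups $m\Z$ and $n\Z$, so \lemref{lem} does apply. It tells us that $B(m,n)$ fails to be $2$-avoidable exactly when $d=2$; since $d=|m-n|$, this is precisely the condition $|m-n|=2$. The converse direction can also be checked directly: if $|m-n|=2$ then $H_1(B(m,n))\cong\Z\oplus\Z_{2}$, which plainly has $\Z_2$ as a direct summand, so $B(m,n)$ is non-$2$-avoidable.

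I do not expect any genuine obstacle here — the argument is just an abelianization computation plus \lemref{lem}. The only two points that deserve a line of comment are (i) the torsion-freeness of $B(m,n)$, which is needed before \lemref{lem} can be quoted and which follows at once from the HNN description, and (ii) the harmless bookkeeping convention $\Z_0:=\Z$, $\Z_1:=0$, under which the three cases in the display above collapse into the single uniform identity $H_1(B(m,n))\cong\Z\oplus\Z_{|m-n|}$.
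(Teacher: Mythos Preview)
Your proposal is correct and follows essentially the same route as the paper: abelianize the relator to get $H_1(B(m,n))\cong\Z\oplus\Z_{|m-n|}$ and then read off the $2$-avoidability condition via the computation in \lemref{lem}. You simply spell out a few details the paper leaves implicit (the exponent-sum calculation, the torsion-freeness of $B(m,n)$ via its HNN description, and the bookkeeping conventions for $|m-n|\le 1$).
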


\begin{proof}
 We have $$B(m,n)^{ab}=\langle a,t | (m-n)a=0\rangle.$$ Thus by Lemma~\ref{lem},  $H_1(B(m,n))=\Z\oplus \Z_{|m-n|}$. 
 \end{proof} 

 Because of Lemma~\ref{bs} we will be considering only above type of Baumslag-Solitar groups: $B(n+2,n)$, $B(n,n+2)$ for our purpose. Other $B(m,n)$'s with $|m-n|\ne 2$ are already $2$-avoidable. 

 \begin{prop}
	For all $n\in \Z$, $B(n, n+2)$ has an index $2$ subgroup $\Gamma'$ which is 2-avoidable.
\end{prop}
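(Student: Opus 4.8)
The plan is to find a concrete index-$2$ subgroup $\Gamma'$ of $B(n,n+2)$ and compute $H_1(\Gamma')$ using the Fox derivative machinery of Theorem~\ref{f}. The natural choice of index-$2$ subgroup comes from a surjection $B(n,n+2)\to\Z_2$; since $B(n,n+2)^{ab}=\Z\oplus\Z_2$, there are a few candidate quotient maps, but the one that is most likely to kill the $\Z_2$ summand in homology of the cover is the map sending $t\mapsto 1$ and $a\mapsto 0$ (equivalently, the composite $B(n,n+2)\to\Z$, $a\mapsto 0,\ t\mapsto 1$, followed by reduction mod $2$). This is the $t$-length parity homomorphism, and it is well-defined because in the relator $ta^{m}t^{-1}a^{-n}$ the total $t$-exponent is $0$. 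Let $\Gamma'$ be the kernel; it has index $2$.

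The core computation is then an application of Fox's theorem. With the presentation $\langle a,t\mid r\rangle$, $r = t a^{n} t^{-1} a^{-(n+2)}$, I would compute the two Fox derivatives:
\begin{align*}
\frac{\partial r}{\partial a} &= t(1 + a + \dots + a^{n-1}) - t a^{n} t^{-1}(1 + a + \dots + a^{n+1}),\\
\frac{\partial r}{\partial t} &= 1 - t a^{n} t^{-1}.
\end{align*}
Next I would write down the permutation representation $\theta\colon B(n,n+2)\to S_2$ attached to $\Gamma'$: here $\theta(a)=\mathrm{id}$ and $\theta(t)=(1\,2)$, i.e. the $1\times 1$ matrices get replaced by $2\times 2$ blocks, with $t$ acting by the swap matrix $\left(\begin{smallmatrix}0&1\\1&0\end{smallmatrix}\right)$ and $a$ by the identity. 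Applying $\theta$ entrywise to the $1\times 2$ Jacobian $J = \left(\tfrac{\partial r}{\partial a},\ \tfrac{\partial r}{\partial t}\right)$ and using $\theta(ta^{n}t^{-1}) = \mathrm{id}$ (the swap composed with itself), I get a $2\times 4$ integer matrix which, by Theorem~\ref{f}, presents $H_1(\Gamma')\oplus\Z$. The key arithmetic point: after the substitution, the geometric series $1+a+\dots+a^{n-1}$ and $1+a+\dots+a^{n+1}$ become the scalars $n$ and $n+2$, so the entries of the presentation matrix involve $n$ and $n+2$ in a way that their difference is again $2$; I then run Smith Normal Form (Proposition~\ref{1}, Corollary~\ref{2}) on the resulting $2\times 4$ matrix. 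The claim is that the invariant factors produce only a free part plus possibly an odd torsion part, i.e. no $\Z_2$ summand, so that after discarding the extra $\Z$ from Theorem~\ref{f} one concludes $H_1(\Gamma')$ has no $\Z_2$ direct summand.

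The main obstacle is making sure the chosen index-$2$ subgroup actually works, i.e. that the Smith Normal Form of the $2\times 4$ matrix $\theta(J)$ has no even invariant factor. It is conceivable that the lift of the $t$-parity cover still carries a $\Z_2$; if so, one would need to instead use the cover coming from $a\mapsto 1,\ t\mapsto 0 \pmod 2$, or a mixed homomorphism, and redo the Fox-derivative computation for that $\theta$ (where now $\theta(a)$ is the swap). So the real content is a short but careful case analysis: compute $\theta(J)$ for the relevant parity homomorphism, diagonalize, and verify the $2$-part of the torsion vanishes. A useful sanity check along the way is the Euler-characteristic/rank bookkeeping: $\Gamma'$ is again a torsion-free one-relator-by-free type group (indeed, as a finite-index subgroup of a $2$-dimensional group it is $2$-dimensional), and the free rank of $H_1(\Gamma')$ is forced by $\chi$, so only the torsion part is in question. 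I expect the computation to collapse cleanly because the relator's $a$-exponents $n$ and $n+2$ have the same parity, and the ``doubling'' from passing to the index-$2$ cover interacts with that parity to eliminate the $2$-torsion.
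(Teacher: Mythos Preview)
Your Fox-derivative strategy is exactly the method the paper uses in its Section~4 re-proof; the paper's first proof of this proposition (Section~3) does the same thing by hand, building the double cover explicitly and writing down the cellular boundary matrix. So the overall plan is fine and matches the paper.

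There is, however, a concrete gap. The single homomorphism you lead with, $\theta(t)=(1\,2)$, $\theta(a)=\mathrm{id}$, does \emph{not} work when $n$ is even. With that $\theta$ one gets
\[
\theta\!\left(\frac{\partial r}{\partial a}\right)=\begin{pmatrix}-(n+2)&n\\ n&-(n+2)\end{pmatrix},\qquad
\theta\!\left(\frac{\partial r}{\partial t}\right)=0,
\]
and for even $n$ the $2\times 2$ block has $\gcd$ of entries equal to $2$ and determinant $4(n+1)$ with $n+1$ odd, so its Smith invariants are $2$ and $2(n+1)$. After stripping the extra $\Z$ from Theorem~\ref{f}, $H_1(\Gamma')$ still carries a $\Z_2$ summand. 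Your closing heuristic (``$n$ and $n+2$ have the same parity, so the doubling eliminates the $2$-torsion'') is therefore too optimistic.

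The fix is precisely the fallback you already anticipate: for even $n$ take instead $\theta(a)=(1\,2)$, $\theta(t)=\mathrm{id}$. Writing $n=2p$, the geometric sums become $p(I+\sigma)$ and $(p+1)(I+\sigma)$, so $\theta(\partial r/\partial a)=-(I+\sigma)$ and $\theta(\partial r/\partial t)=0$; the Smith form is $\mathrm{diag}(1,0)$ and $H_1(\Gamma')$ is free. Thus the argument genuinely splits into two cases by the parity of $n$, with \emph{different} index-$2$ subgroups in each case --- exactly what the paper does (it uses $\phi(a)=1,\phi(t)=0$ for $n$ even and $\phi(a)=0,\phi(t)=1$ for $n$ odd). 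Your proposal would be complete once you commit to this case split rather than hoping a single $\theta$ suffices.
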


\begin{proof}

Case 1: $n=2p$. Let $\Gamma=B(2p,2p+2)=\langle a,t | ta^{2p}t^{-1}=a^{2p+2}\rangle$. Consider the surjective homomorphism, $$\phi:\Gamma \to \Z_2, \phi(a)=1, \phi(t)=0.$$ Clearly $\Gamma'=ker(\phi)$, which has index 2. Let $X$ be the 2-to-1 cover of $B\Gamma$ corresponding to $\Gamma'$. The 1-skeleton $X^{(1)}$ has: 2 vertices $v_{+}, v_{-}$, joined by two edges $a_{+}, a_{-}$, which are lifts of loops represented by generator $a\in \Gamma$. Let's assume $a_{+}$ starts at $v_{+}$, and end at $v_{-}$, similarly for $a_{-}$. the generator $t\in ker(\phi)=\Gamma'\subset \Gamma$, hence it lifts as loops in $X$, say $t_{+}$ at $v_{+}$ and $t_{-}$ at $v_{-}$. To construct 2-skeleton $X^{(2)}$ we need to attach lifts of 2-cell corresponding to relator $r$. There are 2 lifts of $r$, say $r_{+}, r_{-}$. Then $r_{+}$ is represented by the word 

$$r_{+}=t_{+}(a_{-}a_{+})^{p}(t_{+})^{-1}((a_{+})^{-1}(a_{-})^{-1})^{p+1}$$ 

$$r_{-}=t_{-}(a_{+}a_{-})^{p}(t_{-})^{-1}((a_{-})^{-1}(a_{+})^{-1})^{p+1}.$$ 

Thus the cellular chain complex of $X$ is given by $$0 \to \Z^2 \xrightarrow{d_2} \Z^4 \xrightarrow{d_1} \Z\to 0.$$ The cellular boundary map $d_2$ is given by the matrix 
$$ d_2=\bordermatrix{ & r_{+} & r_{-} \cr
              t_{+} & 0 & 0 \cr
              t_{-} & 0 & 0 \cr
             a_{+} & -1 & -1 \cr
             a_{-} & -1 & -1 \cr} \sim \begin{pmatrix}  
 1 & 0 \\         
 0 & 0 \\
 0 & 0 \\
 0 & 0           
\end{pmatrix}.
  $$ 
              
             We denote Smith reduction by $\sim$. Hence $H_1(\Gamma')=H_1(X)=\Z$, thus $\Gamma'$ is 2-avoidable.

Case 2: $n=2p+1$. $\Gamma=B(2p+1,2p+3)=\langle a,t | ta^{2p+1}t^{-1}=a^{2p+3}\rangle$. Consider the homomorphism, $$\phi:\Gamma \to \Z_2, \phi(a)=0, \phi(t)=1.$$ As in previous case consider $\Gamma'=ker(\phi)$, which has index 2. Let $X$ be the 2-to-1 cover of $B\Gamma$ corresponding to $\Gamma'$. The 1-skeleton $X^{(1)}$ has: 2 vertices $v_{+}, v_{-}$, joined by two edges $t_{+}, t_{-}$, which are lifts of loops represented by generator $t\in \Gamma$. Let's assume $t_{+}$ starts at $v_{+}$, and end at $v_{-}$, similarly for $t_{-}$. Here $a\in ker(\phi)=\Gamma'\subset \Gamma$ lifts as loops $a_{+}$ at $v_{+}$ and $a_{-}$ at $v_{-}$. Then $r_{+}$ is represented by the word 

$$r_{+}=t_{+}(a_{+})^{2p+1}(t_{+})^{-1}(a_{-})^{-(2p+3)}$$ 

$$r_{-}=t_{-}(a_{-})^{2p+1}(t_{-})^{-1}(a_{+})^{-(2p+3)}.$$ Thus the cellular chain complex of $X$ is given by $$0 \to \Z^2 \xrightarrow{d_2} \Z^4 \xrightarrow{d_1} \Z\to 0.$$ The cellular boundary map $d_2$ is given by the matrix 
$$ d_2=\bordermatrix{ & r_{+} & r_{-} \cr
              t_{+} & 0 & 0 \cr
              t_{-} & 0 & 0 \cr
             a_{+} & 2p+1 & -(2p+3) \cr
             a_{-} & -(2p+3) & 2p+1 \cr} \sim \begin{pmatrix}  
 1 & 0 \\         
 0 & 8(p+1) \\
 0 & 0 \\
 0 & 0           
\end{pmatrix}.
  $$ 

Hence $H_1(\Gamma')=H_1(X)=\Z\oplus \Z_{8(m+1)}$, thus again $\Gamma'$ is 2-avoidable. 
\end{proof}
  For the cases $B(n+2,n)$, exactly similar argument as in the Proposition~\ref{bs} works out.

\section{Fox derivative method and Main Theorem}
In this section we reprove Proposition~\ref{bs} using Fox derivative, and applying Theorem~\ref{f}. This technique extends to other classes of groups in $\mathscr{F}$, thus proving the Main Theorem~\ref{m}.

The general scheme of proving Theorem~\ref{m} is as follows:
 \begin{enumerate}
        \item In each class of groups $\Gamma\in \mathscr{F}$, we find the jacobian, $$J=((\frac{\partial r_i}{\partial x_j})),$$ from it's presentation. Then we restrict our attention to those $\Gamma$'s which are non-$2$-avoidable.
        \item We take a specific surjective homomorphism $\theta:\Gamma\to \Z_2=S_2$, and consider $\Gamma'=ker \theta$, which is clearly a index $2$ subgroup of $\Gamma$.
        \item We compute $\theta (J)$, considering $S_2$ as the permutation matrix group of order 2, generated by $\begin{pmatrix}  
 0 & 1 \\         
 1 & 0 
        
\end{pmatrix}$. Thus $\theta (J)$ is an integers matrix, whose Smith Normal form, gives us $H_1(\Gamma')$ by Proposition~\ref{1},~\ref{3} and Theorem~\ref{f}.
    \end{enumerate}

\subsection{Baumslag-Strebel Groups} We consider a generalised version of $B(m,n)$, known as Baumslag-Strebel Groups $G_{m,n,k}$, considered in \cite{BS}. They have presentation $$G_{m,n,k}=\langle a,t | t^ka^{m}t^{-k}=a^{n}\rangle.$$ Clearly for $k=1$ we get $B(m,n)$ groups. Let $r=t^ka^mt^{-k}a^{-n}$ denote it's relator. We assume $gcd(mn,k)=1$, this ensures $r$ is not proper power. Using the rules for finding Fox derivative we have 

$$\frac{\partial r}{\partial a}=t^k(1+a+...+a^{m-1})-(1+a+...+a^{n-1}),$$ 

$$ \frac{\partial r}{\partial t}=(1-a^n)(1+t+...+t^{k-1}).$$

It is easy to see by abelianisation, $G_{m,n,k}$ is non-$2$-avoidable for $|m-n|=2$. As in proof of Proposition~\ref{bs} we consider only $G_{n,n+2,k}$. 

Case 1: $n=2p, k=2l$. Consider $\theta:\Gamma \to S_2$, $\theta(a)=\begin{pmatrix}  
 0 & 1 \\         
 1 & 0 
        
\end{pmatrix}, \theta(t)=\begin{pmatrix}  
 1 & 0 \\         
 0 & 1 
        
\end{pmatrix}$, we have $$\theta ( \frac{\partial r}{\partial t})=\begin{pmatrix}  
 0 & 0 \\         
 0 & 0 
        
\end{pmatrix}, \theta ( \frac{\partial r}{\partial a})=\begin{pmatrix}  
 1 & 1 \\         
 1 & 1
        
\end{pmatrix},$$ thus $H_1(ker \theta)=\Z$.

Case 2: $n=2p, k=2l-1$. Consider $\theta:\Gamma \to S_2$, $\theta(a)=\begin{pmatrix}  
 0 & 1 \\         
 1 & 0 
        
\end{pmatrix}$, $\theta(t)=\begin{pmatrix}  
 1 & 0 \\         
 0 & 1 
        
\end{pmatrix}$, we have $$\theta ( \frac{\partial r}{\partial t})=\begin{pmatrix}  
 0 & 0 \\         
 0 & 0 
        
\end{pmatrix}, \theta ( \frac{\partial r}{\partial a})=\begin{pmatrix}  
 1 & 1 \\         
 1 & 1
        
\end{pmatrix},$$ thus $H_1(ker \theta)=\Z$.

Case 3: $n=2p-1, k=2l$. Consider $\theta:\Gamma \to S_2$, $\theta(a)=\begin{pmatrix}  
 0 & 1 \\         
 1 & 0 
        
\end{pmatrix}$, $\theta(t)=\begin{pmatrix}  
 1 & 0 \\         
 0 & 1 
        
\end{pmatrix}$, we have $$\theta ( \frac{\partial r}{\partial t})=\begin{pmatrix}  
 0 & 0 \\         
 0 & 0 
        
\end{pmatrix}, \theta ( \frac{\partial r}{\partial a})=\begin{pmatrix}  
 1 & 1 \\         
 1 & 1
        
\end{pmatrix},$$ thus $H_1(ker \theta)=\Z$.

Case 4: $n=2p-1, k=2l-1$. Consider $\theta:\Gamma \to S_2$, $\theta(a)=\begin{pmatrix}  
 1 & 0 \\         
 0 & 1 
        
\end{pmatrix}$, $\theta(t)=\begin{pmatrix}  
 0 & 1 \\         
 1 & 0 
        
\end{pmatrix}$, we have $$\theta ( \frac{\partial r}{\partial t})=\begin{pmatrix}  
 0 & 0 \\         
 0 & 0 
        
\end{pmatrix}, \theta ( \frac{\partial r}{\partial a})=\begin{pmatrix}  
 -(2p-1) & 2p+1 \\         
 2p+1 & -(2p-1)
 \end{pmatrix},$$ thus $H_1(ker \theta)=\Z\oplus \Z_{8p}$.

Thus $G_{n,n+2,k}$ is virtually $2$-avoidable for all $n,k\in \Z$. Exactly similar argument proves the same for $G_{n+2,n,k}$. For $k=1$, we recover Proposition~\ref{bs}.

\subsection{Baumslag-Gersten Groups} These are one relator group of the form $$BG(m,n)=\langle a,t | a^ta^ma^{-t}=a^{n}\rangle=\langle a,t | tat^{-1}a^mta^{-1}t^{-1}=a^{n}\rangle.$$ Clearly $BG(m,n)$ are not 2-avoidable when $|m-n|=2$, hence we consider as in case of $B(m,n)$, $m=n+2$. Firstly, for $r=tat^{-1}a^mta^{-1}t^{-1}a^{-n}$, 
$$\frac{\partial r}{\partial a}=t+tat^{-1}(1+a+...+a^{m-1})-a^nt-(1+a+...+a^{n-1}),$$ $$ \frac{\partial r}{\partial t}=(1-a^n)-tat^{-1}(1-a^m).$$

Case 1: $n=2p$; Consider $\theta:\Gamma \to S_2$, $\theta(a)=\theta(t)=\begin{pmatrix}  
 0 & 1 \\         
 1 & 0 
        
\end{pmatrix}.$ We have $$\theta ( \frac{\partial r}{\partial t})=\begin{pmatrix}  
 0 & 0 \\         
 0 & 0 
        
\end{pmatrix}, \theta ( \frac{\partial r}{\partial a})=\begin{pmatrix}  
 1 & 1 \\         
 1 & 1
        
\end{pmatrix},$$ thus $H_1(ker \theta)=\Z$.

Case 2: $n=2p-1$; Consider $\theta:\Gamma \to S_2$, $\theta(a)=\theta(t)=\begin{pmatrix}  
 0 & 1 \\         
 1 & 0 
        
\end{pmatrix}.$ We have $$\theta ( \frac{\partial r}{\partial t})=\begin{pmatrix}  
 2 & -2 \\         
 -2 & 2 
        
\end{pmatrix}, \theta ( \frac{\partial r}{\partial a})=\begin{pmatrix}  
 -1 & 3 \\         
 3 & -1
        
\end{pmatrix},$$ thus $H_1(ker \theta)=\Z\oplus \Z_4$.

 \subsection{Meskin Groups} We consider the following class of one-relator groups which were considered by Stephen Meskin in~\cite{Mes}, $$\Gamma=\langle s_1,s_2,...,s_n| s_1^{k_1}s_2^{k_2}...s_n^{k_n}\rangle,$$
 where $n\ge 2$, $k_i\ge 1$. This class includes non-orientable surface groups, torus knot groups. $\Gamma$ is non-$2$-avoidable groups if and only if $gcd(k_1,...,k_n)=2$. Clearly 
 
 $$\frac {\partial r}{\partial s_i}=s_1^{k_1}...s_{i-1}^{k_{i-1}}(1+s_i+s_i^2+...+s_i^{k_i-1}),$$ for all $1\le i \le n$.  

 Consider $\Gamma'=ker \theta$, $$\theta: \Gamma \to S_2, \theta(s_i)=\begin{pmatrix}  
 0 & 1 \\         
 1 & 0 
        
\end{pmatrix} .$$ Since $k_i$'s are all even, with some calculation we have $$\theta (\frac {\partial r}{\partial s_i})=\begin{pmatrix}  
 k_i/2 & k_i/2 \\         
 k_i/2 & k_i/2 
        
\end{pmatrix}.$$ Thus clearly $$\theta (J)=\begin{pmatrix}  
 k_1/2 & k_1/2 \\         
 k_1/2 & k_1/2 \\
  k_2/2 & k_2/2 \\
  ...\\
  .\\
  .\\
   k_n/2 & k_n/2 \\
    k_n/2 & k_n/2

\end{pmatrix}.$$ Since $gcd(k_1/2,k_2/2,...,k_n/2)=1$, the above matrix clearly has Smith Normal form having just two entries $1$, thus $H_1(\Gamma')=\Z^2$.

\section{Acknowledgement}
The author thanks his advisor Alexander Dranishnikov for extremely helpful discussions his support. The author also thanks Lorenzo Ruffoni for helpful conversation regarding various topics about the project.

\end{document}